\newtheorem{lemma}{Lemma}
\newcommand{\tr}[0]{\,\text{tr}\,}
\newcommand{\vect}[2]{\left( \begin{array}{l} #1 \\ #2 \end{array} \right)}
\newcommand{\matr}[4]{\left( \begin{array}{ll} #1 & #2 \\ #3 & #4 \end{array}\right)}
\newcommand{\intinfty}[0]{\frac{1}{2\pi i} \oint_\infty}
\newcommand{\irho}[0]{\left( {\scriptstyle \int}_{a_-}^\lambda \rho^{(1)}_N \right)}
\begin{document}

\title{Valence independent formula for the equilibrium measure}
\author{Patrick Waters}
\maketitle

\abstract{We derive a new formula for the equilibrium measure for eigenvalues of random matrices sampled from polynomial perturbations of the GUE, valid in the one-cut case.  The virtue of our formula is that it depends on the potential only implicitly through the endpoints of support of the equilibrium measure.  Our motivation is the problem of computing explicit formulas for generating functions which enumerate graphs embedded in a Riemann surface.  To demonstrate the utility of our formula for the equilibrium measure, we derive a formula for the generating function $e_1$ enumerating maps on the torus.  This formula is ``valence independent'' in the sense that it holds regardless of what numbers of edges are allowed to meet at vertices; furthermore it subsumes formulas for $e_1$ given in \cite{BIZ},\cite{EMcLP08}, and \cite{EP11} as special cases.}

\section{Introduction and statement of results}

\subsection{Background on equilibrium measures}
In this article we study the equilibrium measure $\psi(\lambda)\,d\lambda$ associated with a potential function 
\begin{align}
V(\lambda) =& \frac{1}{x} \left( \frac{1}{2} \lambda^2 + \sum_{j=1}^d t_j \lambda^j \right)
\end{align}
depending on parameters $x,t_1,\ldots,t_d$.  This measure is characterized as the unique solution to the following variational problem \cite{DeiftOPs}.

\emph{Find a function $\psi$ supported on a compact subset of $\mathbb{R}$ and a constant $l$ such that}
\begin{align}
g\{\psi\}(\lambda)_+ + g\{\psi\}(\lambda)_- =& l + V(\lambda), & \lambda\in \text{supp}\,(\psi), \label{varEq} \\
g\{\psi\}(\lambda)_+ + g\{\psi\}(\lambda)_- \leq & l + V(\lambda), & \lambda\notin \text{supp}\,(\psi), \label{varIneq}
\end{align}
\emph{where the plus and minus subscripts are boundary values from the upper and lower half planes, and}
\begin{align} \label{gDef}
g\{\psi\}(\lambda) =& \int_\mathbb{R} \log(\lambda -s)\psi(s)\, ds.
\end{align}

The motivation for studying such a measure comes from random matrix theory.
Consider the following probability distribution on random $N\times N$ Hermitian matrices $M$.
\begin{align}
dP_N(M) =& \frac{1}{Z_N}\exp\left(-N \tr V(M)\right)\, dM, \label{measure1} \\
dM=& \prod_{1\leq i<j\leq N} dM^{(Re)}_{ij}dM^{(Im)}_{ij} \prod_{i=1}^N dM_{ii}
\end{align} 
If $t=0$ so that $V(M)=\frac{1}{2}M^2$, then (\ref{measure1}) becomes the Gaussian Unitary Ensemble (GUE).  
One of the cornerstones of random matrix theory is Wigner's semicircle rule, which states that as $N\rightarrow\infty$, the mean density of eigenvalues for $N\times N$ GUE random matrices converges to a (squashed) semicircle.  For an interval $(a,b)$,
\begin{align} \label{Wigner1}
\lim_{N\rightarrow\infty} \frac{1}{N}\mathbb{E}\left[\# \text{ eigenvalues in }(a,b)\right] = \int_{a}^b \frac{1}{2\pi} \mathbbm{1}_{[-2,2]}(\lambda) \sqrt{4-\lambda^2} \, d\lambda.
\end{align}
For the random matrices defined by (\ref{measure1}), the mean density of eigenvalues will converge to the equilibrium measure for the potential $V(\lambda)$ \cite{DeiftOPs}.  Thus Wigner's semicircle rule states that the equilibrium measure corresponding to the potential $V(\lambda)=\frac{1}{2}\lambda^2$ is the semicircle distribution (\ref{Wigner1}).

For the GUE, the equilibrium measure is supported on the interval $[-2,2]$.  We are interested in the case where the equilibrium measure is supported on a single interval.  Ercolani and McLaughlin gave a sufficient condition for this in \cite{EMcL03}.  For $r,\gamma>0$ define
\begin{align}
\mathbb{T}(r,\gamma)= \left\{ t \in \mathbb{R}^d : \; |t|<r,\; t_d> \gamma \sum_{j=1}^{d-1}|t_j|\right\}.
\end{align}
If $d$ is even then there exist $r,\gamma>0$ such that for $t\in \mathbb{T}(t,\gamma)$ and $x$ in some neighborhood of 1, the equilibrium measure is supported on a single interval.  
This is known in the literature as the ``one-cut case''.  For one-cut potentials, there is a well known formula for the equilibrium measure.  With the following notation for Laurent coefficients 
\begin{align}
[y^{r}]_{\infty} F(y) =& \, \text{coefficient of $y^r$ in Laurent expansion of $F(y)$ about $\infty$} \nonumber \\
=&  \lim_{R\rightarrow\infty} \frac{1}{2\pi i} \oint_{\partial B_R(0)} y^{-r} F(y) \, \frac{dy}{y},
\end{align}
the equilibrium measure corresponding to a one-cut potential is given by the following formula \cite{DeiftOPs}.
\begin{align}
\psi(\lambda) =& \frac{1}{2\pi x } \mathbbm{1}_{[\alpha_-,\alpha_+]}(\lambda) \label{psiFormula} \sqrt{(\alpha_+-\lambda)(\lambda-\alpha_-)}\, h(\lambda) \\
h(\lambda)=& [y^{-1}]_{\infty} \frac{xV'(y)}{(y-\lambda)\sqrt{(y-\alpha_-)(y-\alpha_+)}}. \label{oldHFormula}
\end{align} 
Also, the endpoints of support of the equilibrium measure can be determined from the equations
\begin{align}
0 =& [y^{-1}]_{\infty} \frac{ V'(y)}{\sqrt{(y-\alpha_-)(y-\alpha_+)}}  \label{endPts1} \\
2 =& [y^{-2}]_{\infty} \frac{ V'(y)}{\sqrt{(y-\alpha_-)(y-\alpha_+)}}. \label{endPts2}
\end{align}
Notice that the above formula for the equilibrium measure depends on the time parameters $t$ both explicitly in through $V(\lambda)$, and implicitly through $\alpha_\pm$.  It is often natural to make the following change of variables
\begin{align}
u=& \frac{\alpha_+ + \alpha_-}{2} \\
z=& \frac{(\alpha_+ - \alpha_-)^2}{16}.
\end{align}
If the potential $V(\lambda)$ is an even function, then $u=0$.  A heuristic reason for this change of variables is that the functions $u,z$ are the limits as $N\rightarrow\infty$ of sequences of recurrence coefficients for a family of orthogonal polynomials associated with the matrix ensemble \cite{EMcLP08}.

In this article we derive a formula for the equilibrium measure that depends on $t$ \emph{only implicitly through $\alpha_\pm$.}  The purpose of this is to construct ``valence independent'' formulas for generating functions enumerating graphs embedded in surfaces.  By valence independent, we mean a formula that holds regardless of the allowed numbers of edges meeting at vertices.


\subsection{Background on topological expansion of RM partition function}

In the context of random matrix combinatorics, a map of genus $g$ is an equivalence class of labeled graphs embedded in a genus $g$ oriented surface, such that the faces are topological discs.  This condition on the faces ensures that the graph is embedded in a surface of minimal genus.  Two maps are equivalent if an orientation preserving homeomorphism of the surface takes one map to the other, preserving labels.  

The labels a map is equipped with are slightly complicated, but necessary for the connection to random matrices.  For each $j$, let $k_j$ be the number of $j$-valent vertices of some map.  Each vertex of valence $j$ is assigned a unique label from the set $1,2,\ldots, k_j$.  Thus two vertices of the same valence must have different labels, but two vertices of different valences could have the same label.  Also, at each vertex one of the incident edges is marked; this marking of edges can be represented as a function $f$ from the vertex set to the edge set such that each edge $f(V)$ is incident to the vertex $V$.

The surprising connection between random matrices and maps is the topological expansion of the random matrix partition function \cite{BIZ}, \cite{EMcL03}.
\begin{align}
\log \frac{Z_N(t)}{Z_N(0)} \sim & N^2 e_0(t) + e_1(t) +N^{-2}e_2(t) +\ldots & \text{as }N\rightarrow \infty, \label{genus expansion} \\
e_g(x,t)=& \sum_{\Gamma\in \text{genus $g$ maps}} x^{\# \text{faces of }\Gamma} \prod_{j \geq 1} \frac{(-t_{j})^{\#\text{$j$-valent vertices of }\Gamma}}{(\#\text{$j$-valent vertices of }\Gamma)!}
\end{align} 
The sum on the right hand side of (\ref{genus expansion}) is divergent.  By ``$\sim$'' we mean that any partial sum $N^2e_0 + \ldots + N^{2-2g}e_g$ gives an approximation for the left hand side with error of order $o(N^{2-2g})$.

Bessis, Itzykson and Zuber stated the topological expansion (\ref{genus expansion}) in \cite{BIZ}, and showed that if such an expansion exists and can be differentiated term by term with respect to the parameters $t_j$, then the coefficients $e_g$ are generating functions for maps.  Ercolani and McLaughlin gave sufficient conditions on $t$ for the topological expansion to hold, and be differentiated term by term \cite{EMcL03}.  Their proof involves analyzing the Riemann-Hilbert problem for a family of orthogonal polynomials associated with the matrix ensemble.  For the measure $dP_N(M)$ to be normalizable as a density on Hermitian matrices it is necessary for the 
leading term of the potential to be $t_d \lambda^d$ with $d$ even and $t_d>0$.  However, we wish to consider potentials where $d$ is odd.  In the case $d=3$, Bleher and Dea\~{n}o considered an ensemble of unitarily diagonalizable matrices with eigenvalues on some curve in $\mathbb{C}$ such that $dP_N(M)$ can normalized to be a probability measure.  Using Riemann-Hilbert methods, they showed that the topological expansion holds in this case \cite{Bleher10}.  It is clear that their proof of the topological expansion can be extended to the case of a potential with an arbitrary leading term.

Bessis, Itzykson and Zuber gave formulas for $e_0, e_1$ and $e_2$ in the case of a potential $V(\lambda) = \frac{1}{2} \lambda^2 + t_4 \lambda^4$ \cite{BIZ}.  In this case, the functions $e_g$ enumerate 4-valent maps.  For example, they found that
\begin{align}
e_1 =& -\frac{1}{12} \log (2-z).
\end{align}
Recall that the function $z$ is related to the support of the equilibrium measure by $\alpha_\pm =u \pm 2\sqrt{z}$.  Note that in this case $u=0$ because the potential is an even function.
Ercolani, McLaughlin and Pierce \cite{EMcLP08} generalized the formulas for $e_g$ given in \cite{BIZ} to the case of a potential $V(\lambda)= \frac{1}{2}\lambda^2 + t_{2\nu}\lambda^{2\nu}$.  For example, in this case the generating function for maps on the torus is given by
\begin{align}
e_1 =& -\frac{1}{12} \log (\nu -(\nu-1)z). \label{EMcLP e1}
\end{align}
Their method for computing $e_g$ exploits the connection between random matrices and orthogonal polynomials.  In particular, they used a continuum limit of the string equations for recurrence coefficients of the orthogonal polynomials.

Ambjorn, Checkov, Kritjansen and Makeenko \cite{BeyondSphericalLimit} gave an alternate method for calculating the generating functions $e_g$.  Their approach relies on loop equations, which are closely related to Virasoro constraints for the random matrix partition function.  Eynard later streamlined the calculation of generating functions from loop equations \cite{EynardLoopEqs}.  In section 6 we will derive a formula for $e_1$ from loop equations and our valence independent formula for the equilibrium measure.  We rely on the exposition of loop equations given by Borot and Guionnet in \cite{GuionnetBeta}.


\subsection{Results}

We derive valence independent formulas for the function $h(\lambda)$ appearing in (\ref{psiFormula}).  If the potential $V(\lambda)$ is an even function we find that
\begin{align} \label{hFormula}
h(\lambda) =&\sum_{k=0}^{d-2} \left(\lambda -2\sqrt{z} \right)^k \sum_{m=\lceil k/2 \rceil}^k \frac{2^{3m-2k}}{(2m+1)!!} \binom{m}{k-m} z^{m-k/2} \left( \frac{1}{z_x}\partial_x \right)^m \frac{1}{z_x} .
\end{align}
It will be clear from our calculations in the next several sections that by replacing $\sqrt{z}\mapsto -\sqrt{z}$ in (\ref{hFormula}), we obtain a formula for $h(\lambda)$ expanded about the left endpoint $\lambda=-2\sqrt{z}$ of the equilibrium measure.

To give a formula in the case of a general polynomial potential, we define a sequence of functions $\phi_m,\psi_m$ that play a role analogous to $(z_{x}^{-1}\partial_x)^m z_{x}^{-1}$ appearing in (\ref{hFormula}).
\begin{alignat}{2}
\vect{\phi_0}{\psi_0}=& \vect{0}{x}, \qquad & 
\vect{\phi_{m+1}}{\psi_{m+1}} =&  \frac{1}{z_{x}^2-z u_{x}^2} \matr{-z u_x }{z_x}{z z_x}{-z u_x} \partial_x  \vect{\phi_m}{\psi_m} 
\end{alignat}
Also, define sequences of coefficients $c^{(\phi)}_{k,m},c^{(\psi)}_{k,m}$ so that the following formula holds for all integers $k\geq 0$.
\begin{align}
 \frac{1}{(T-2+T^{-1})^{k+1}} 
=& \sum_{m =1}^{k+1}
c^{(\phi)}_{k,m} \frac{m!}{(T-T^{-1})^{2m}}\sum_{l=0}^m \binom{m}{l}^2 T^{2l-m}\nonumber  \\
&\qquad +c^{(\psi)}_{k,m} \frac{m!}{(T-T^{-1})^{2m}}\sum_{l=0}^{m-1} \binom{m-1}{l}\binom{m+1}{l+1} T^{2l+1-m} \label{c phi psi def}
\end{align}
It is shown in section 2.4 that such coefficients exist.

We will prove the following formula for $h(\lambda)$, valid for an arbitrary polynomial potential.
\begin{align}
h(\lambda) =& \sum_{k =0}^{d-2} (\lambda-u- 2\sqrt{z})^k 
\sum_{m =1}^{k+1} \sqrt{z}\,{}^{m-k-2}\left( c^{(\phi)}_{k,m} \sqrt{z}\phi_m +c^{(\psi)}_{k,m} \psi_m\right) \label{h formula}
\end{align}
By replacing $\sqrt{z}\mapsto -\sqrt{z}$ in (\ref{h formula}), one obtains a formula for $h(\lambda)$ centered at the left endpoint $u-2\sqrt{z}$ of the equilibrium measure.

Using (\ref{h formula}) and loop equations, we derive a formula for $e_1$. 
\begin{align}
e_1 =& \frac{1}{24} \log \frac{ (\partial_x u)^2 z - (\partial_x z)^2}{z^2} \label{e1 valence indep} 
\end{align}
This formula holds for an arbitrary polynomial potential $V(\lambda)$.  In section 7 we explain how this formula reduces to 
\begin{align}
e_1 =& \frac{1}{24} \log \frac{4-u^2/z}{(j-(j-2)z)^2-(j-2)^2u^2z}
\end{align}
in the special case of a potential $V(\lambda) = \frac{1}{2}\lambda^2 +t \lambda^j$.  This corresponds to the case where each vertex has the same valence $j$, thus extending formula (\ref{EMcLP e1}) of \cite{EMcLP08} to the case where $j$ is possibly odd.


\subsection{Examples}
Formula (\ref{hFormula}) gives a series for $h(\lambda)$.
\begin{align}
h(\lambda) =& \frac{1}{z_x} 
 -\frac{2 \sqrt{z} z_{xx}}{3 z_{x}^3}  \left( \lambda-2\sqrt{z} \right)
+ \left( -\frac{ z_{xx}}{6 z_{x}^3}+ \frac{12 z  z_{xx}^2- 4z z_{x} z_{xx}}{15 z_{x}^5} \right)
\left(  \lambda-2\sqrt{z}  \right)^2 +\ldots  \label{h even example}
\end{align}
The sum is actually finite since $h(\lambda)$ has polynomial degree $d-2$.  Thus ignoring the ``$\ldots$'' in (\ref{h even example}) gives a formula for $h(\lambda)$ valid for potentials $V(\lambda) = (t_2 +\frac{1}{2})\lambda^2 + t_4 \lambda^4$.

For small indices, the coefficients appearing in formula (\ref{h formula}) are given in the following tables, calculated using $\emph{Mathematica}$.  The coefficients along the diagonal appear to be $2^k/(2k+1)!!$, but otherwise we are not able to give a general formula.
\begin{align}
\begin{array}{cc|ccccc}
c^{(\phi)}_{k,m}&&&&m&& \\ 
&&1&2&3&4&5 \\ \hline
 &0 & 1 & 0 &  0 & 0 & 0 \\
 &1 & 0 & \frac{2}{3} & 0 & 0 & 0 \\
 k&2 & 0 & -\frac{1}{30} & \frac{4}{15} & 0 & 0 \\
 &3 & 0 & \frac{1}{140} & -\frac{2}{105} & \frac{8}{105} & 0 \\
 &4 & 0 & -\frac{1}{630} & \frac{1}{252} & -\frac{2}{315} & \frac{16}{945}
\end{array}
\qquad
\begin{array}{cc|ccccc}
c^{(\psi)}_{k,m}&&&&m&& \\ 
&&1&2&3&4&5 \\ \hline
 &0 & 1 & 0 &  0 & 0 & 0 \\
 &1 & -\frac{1}{6} & \frac{2}{3} & 0 & 0 & 0 \\
 k&2 & \frac{1}{30} & -\frac{1}{10} & \frac{4}{15} & 0 & 0 \\
 &3 & -\frac{1}{140} & \frac{2}{105} & -\frac{4}{105} & \frac{8}{105} & 0 \\
 &4 &  \frac{1}{630} & -\frac{1}{252} & -\frac{1}{140} & -\frac{2}{189} &\frac{16}{945}
\end{array}
\nonumber
\end{align}
The first few functions $\phi_m,\psi_m$ are given below.  We calculated them using \emph{Mathematica}.
\begin{align}
\begin{array}{ >{\displaystyle}c | *3{>{\displaystyle}c}}
m & 0 & 1 & 2 \\
   \phi_m & 0 & \frac{z_x}{z_{x}^2-z u_{x}^2} & \frac{z u_{xx} z_{x}^3-z^2 u_{x}^3 z_{xx}+u_x z_{x}^4+z
   u_{x}^3 z_{x}^2-3 z u_x z_{x}^2 z_{xx}+3 z^2
   u_{x}^2 u_{xx} z_x}{\left(z
   u_{x}^2-z_{x}^2\right)^3}\\
   \psi_m & x & \frac{z u_x}{z u_{x}^2-z_{x}^2} & \frac{-2 z u_{x}^2 z_{x}^3+3 z^2 u_{x}^2 z_{x} z_{xx}-z^3
   u_{x}^3 u_{xx}-3 z^2 u_x u_{xx} z_{x}^2+z z_{x}^3 z_{xx}}{\left(z u_{x}^2-z_{x}^2\right)^3}
   \end{array}
   \nonumber
   \end{align}
If $V(\lambda)= \frac{1}{2}\lambda^2 +\sum_{j=1}^4 t_j \lambda^j$, then the polynomial $h(\lambda)$ is quadratic.  Formula (\ref{h formula}) becomes
\begin{align}
h(\lambda) =& 
 \left(\phi_1+\frac{\psi_1}{\sqrt{z}} \right)
+\left(\frac{2}{3}\left(\phi_2 +\frac{\psi_2}{\sqrt{z}}\right)-\frac{1}{6z}\psi_1  \right)(\lambda-u-2\sqrt{z}) \nonumber \\
& \qquad + \left(\frac{4}{15}\left(\phi_3+\frac{\psi_3}{\sqrt{z}}\right)  -\frac{1}{10\sqrt{z}}\left(\frac{\phi_2}{3}+\frac{\psi_2}{\sqrt{z}}\right) +\frac{\psi_1 }{30z^{3/2}}\right)\left(\lambda-u-2\sqrt{z}\right)^2 \\
=& \frac{1}{\sqrt{z}u_x + z_x} - \frac{\sqrt{z}u_{x}^2 + 3u_x z_x + 4z u_{xx} + 4\sqrt{z} z_{xx}}{6(\sqrt{z}u_x +z_x)^3}(\lambda-u-2\sqrt{z}) \nonumber \\
&  \qquad +\frac{\text{Polynomial with 16 terms}}{(\sqrt{z}u_x +z_x)^5}(\lambda-u-2\sqrt{z})^2.
\end{align}


\section{Formula for $h(z)$, general polynomial potential}

In this section we prove formula (\ref{h formula}).  
Expanding $h(\lambda)$ in a geometric series
we obtain
\begin{align}
h(\lambda)=& [y^{-1}]_{\infty} \frac{x V'(y)}{(y-\lambda)\sqrt{(y-\alpha_-)(y-\alpha_+)}} \nonumber \\
=& \sum_{k\geq 0} (\lambda-\alpha_+)^k I_k, \label{Ik 12345}
\end{align}
where $I_{k}$ is the following integral.
\begin{align}
I_k=&   \frac{1}{2\pi i}\oint_{\infty} \frac{ x V'(y)}{(y-\alpha_+)^{k+1}} \, \frac{dy}{\sqrt{(y-\alpha_-)(y-\alpha_+)}}
 \label{hCoeffEq1}
\end{align}
By $\oint_{\infty}$ we mean $\lim_{R \rightarrow\infty} \oint_{\partial B_R(0)}$; that is, the contour is a small circle around $\infty$ on the Riemann sphere.

Let us summarize the argument to follow in the next several subsections.  In section 2.1, we make a change of variables to express $I_{k}$ as the residue of a rational function at $\infty$; then assuming both the existence of $c^{(\phi)}_{k,m},c^{(\psi)}_{k,m}$ as defined in the introduction, and formulas for $\phi_m$ and $\psi_m$, we prove formula (\ref{h formula}).  In sections 2.2 and 2.3 we prove the formulas for $\phi_m$ and $\psi_m$ used in section 2.1.  In section 2.4 we prove that the coefficients $c^{(\phi)}_{k,m},c^{(\psi)}_{k,m}$ exist.


\subsection{Uniformizing the square root in $I_{k}$}
We make a change of variables to uniformize the square root appearing in (\ref{hCoeffEq1}).  This change of variables was applied to a similar integral in \cite{DiF04} to derive the leading order string equations.  It reflects the Motzkin path structure of string equations, where $T,1,T^{-1}$ correspond to upward, flat and downward steps. 
\begin{alignat}{2}
y=&  \sqrt{z}\,T + u + \sqrt{z} \,T^{-1} &\qquad
y-\alpha_+ =&  \sqrt{z}\,T^{-1}(T-1)^2 \nonumber \\
\frac{dT}{T} =& \frac{dy}{\sqrt{(y-\alpha_-)(y-\alpha_+)}}  &
y-\alpha_- =& \sqrt{z}\, T^{-1}(T+1)^2  \label{uniformize}
\end{alignat}
We now apply this change of variables to the integral in (\ref{hCoeffEq1}).
\begin{align}
I_{k}=&  \frac{1}{2\pi i}\oint_{\infty} \frac{ x V'(y)}{(y-\alpha_+)^{k+1}} \, \frac{dy}{\sqrt{(y-\alpha_-)(y-\alpha_+)}} \nonumber \\
=&   \frac{1}{2\pi i} \oint_{\infty} \frac{x V'( \sqrt{z}\,T + u + \sqrt{z}\, T^{-1})}{\sqrt{z}^{k+1}(T-2 + T^{-1})^{k+1}} \, \frac{dT}{T} \label{Ik formula1}
\end{align}
Let us explain the following calculation.  In formula (\ref{Ik formula1}) we express the integral as a Laurent coefficient at infinity; on the next line we apply the definition (\ref{c phi psi def}) of the coefficients $c^{(\phi)}_{k,m},c^{(\psi)}_{k,m}$; finally we apply formulas for $\phi_m,\psi_m$ which will be proven later in this section.
\begin{align}
I_k=& [T^0]_{\infty} \frac{ x V'(\sqrt{z}\, T + u + \sqrt{z}\, T^{-1})}{\sqrt{z}^{k+1}(T-2+ T^{-1})^{k+1}} \nonumber  \\
=& [T^0]_{\infty} \sum_{m =1}^{k+1}
c^{(\phi)}_{k,m} \frac{ x V'(\sqrt{z}\, T + u + \sqrt{z}\, T^{-1})m!}{\sqrt{z}^{k+1}(T-T^{-1})^{2m}}\sum_{l=0}^m \binom{m}{l}^2 T^{2l-m}\nonumber  \\
&\qquad \qquad \qquad +c^{(\psi)}_{k,m} \frac{x V'(\sqrt{z}\, T + u + \sqrt{z}\, T^{-1})m!}{\sqrt{z}^{k+1}(T-T^{-1})^{2m}}\sum_{l=0}^{m} \binom{m-1}{l}\binom{m+1}{l+1} T^{2l+1-m}  \nonumber \\
= & \sum_{m=1}^{k+1} \sqrt{z}^{m-k-1} c^{(\phi)}_{k,m} \phi_{m} + \sqrt{z}^{m-k-2} c^{(\psi)}_{k,m} \psi_m \label{Ik last formula}
\end{align}
Equation (\ref{Ik last formula}) is obviously equivalent to our formula (\ref{h formula}) for the function $h(\lambda)$.  
Lemma 3 in section 2.4 will justify the last line of the above calculation.


\subsection{Residue formulas for $\phi_m$ and $\psi_m$}
The functions $\phi_m, \psi_m$ are defined by
\begin{alignat}{2}
\vect{\phi_0}{\psi_0}=& \vect{0}{x}, \qquad & 
\vect{\phi_{m+1}}{\psi_{m+1}} =&  \frac{1}{z_{x}^2-z u_{x}^2} \matr{-z u_x }{z_x}{z z_x}{-z u_x} \partial_x  \vect{\phi_m}{\psi_m}. \label{recursion}
\end{alignat}
Using the change of variables (\ref{uniformize}) in equations (\ref{endPts1}) and  (\ref{endPts2}), we obtain the following restatements of the equations for the end points of support of the equilibrium measure.
\begin{align}
0=& [T^0]_{\infty}\, V'(T+u+zT^{-1}) \label{altEndPts1}\\
1=& [T^0]_{\infty}\, TV'(T+u+zT^{-1}) \label{altEndPts2}
\end{align}
Notice that the expression $V'(T+u+zT^{-1})$ depends on $x$ both explicitly through the prefactor $1/x$ in the definition of $V$, and implicitly through $u$ and $z$.  For the rest of this section it will be convenient to multiply through by $x$ to eliminate explicit $x$ dependence in the potential.
In this section we prove the following formula.

\begin{lemma} The functions $\phi_m,\psi_m$ have the following integral representations.
\begin{align}
 \vect{\phi_m}{\psi_m} =& [T^0]_{\infty}\,\vect{ xV^{(m+1)}(T+u +zT^{-1})}{TxV^{(m+1)}(T+u+zT^{-1})} \label{target 2}
\end{align}
\end{lemma}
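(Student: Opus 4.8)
The plan is to prove the integral representation \eqref{target 2} by induction on $m$, showing that the right-hand side of \eqref{target 2} satisfies the same recursion \eqref{recursion} with the same initial condition. For the base case $m=0$, I would evaluate $[T^0]_\infty xV'(T+u+zT^{-1})$ and $[T^0]_\infty TxV'(T+u+zT^{-1})$ directly: after multiplying the endpoint equations \eqref{altEndPts1}, \eqref{altEndPts2} through by $x$, these give $0$ and $x$ respectively, matching $\vect{\phi_0}{\psi_0} = \vect{0}{x}$. (One must be a little careful here: \eqref{altEndPts1} and \eqref{altEndPts2} are written in terms of $V'$ without the explicit $1/x$ prefactor stripped, so I would first restate them as $[T^0]_\infty xV'(\cdot) = 0$ and $[T^0]_\infty TxV'(\cdot) = x$, consistent with the remark that for the rest of the section we clear the explicit $x$-dependence.)

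For the inductive step, write $F = xV^{(m+1)}(T+u+zT^{-1})$ so that, by the inductive hypothesis, $\phi_m = [T^0]_\infty F$ and $\psi_m = [T^0]_\infty TF$. I would differentiate these identities with respect to $x$, using that $[T^0]_\infty$ commutes with $\partial_x$ (the contour at $\infty$ does not depend on $x$), and apply the chain rule to $F$, whose $x$-dependence enters only through $u$ and $z$:
\begin{align}
\partial_x F =& xV^{(m+2)}(T+u+zT^{-1})\left(u_x + z_x T^{-1}\right) + V^{(m+1)}(T+u+zT^{-1}), \nonumber
\end{align}
where the last term comes from $\partial_x(xV^{(m+1)})$ holding the argument fixed, i.e. from the explicit prefactor. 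I then need to re-express the $T^{-1}$ factor and the lone $V^{(m+1)}$ term as integer powers of $T$ times $xV^{(m+2)}(\cdot)$, so that the resulting Laurent coefficients $[T^0]_\infty$ and $[T^\pm\text{something}]_\infty$ can be reorganized into the two combinations $[T^0]_\infty xV^{(m+2)}(\cdot)$ and $[T^0]_\infty TxV^{(m+2)}(\cdot)$ that we want to identify with $\phi_{m+1},\psi_{m+1}$. The key algebraic tool is that $T^{-1}$ acting inside $[T^0]_\infty$ is the same as shifting which Laurent coefficient we extract; combined with the two scalar identities $[T^0]_\infty xV^{(m+2)}(\cdot) \cdot (\text{stuff})$ one gets a $2\times 2$ linear system relating $\partial_x\phi_m, \partial_x\psi_m$ to $\phi_{m+1},\psi_{m+1}$, and inverting that system should reproduce exactly the matrix $\frac{1}{z_x^2 - zu_x^2}\matr{-zu_x}{z_x}{zz_x}{-zu_x}$ appearing in \eqref{recursion}.

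The main obstacle I anticipate is the bookkeeping in that linear-algebra step: one must correctly handle the interaction between the explicit differentiation of the $x$-prefactor (which lowers the order of $V$-derivative by one and reintroduces a $V^{(m+1)}$ term) and the two scalar constraints coming from \eqref{altEndPts1}--\eqref{altEndPts2} (differentiated in $x$, these give identities of the form $[T^0]_\infty \partial_x\big(xV^{(m+1)}(\cdot)\big) = \text{const}$), and show that together they close the recursion with precisely the stated matrix rather than some equivalent-looking but different one. A useful sanity check will be to verify the $m=1$ case against the explicit formulas for $\phi_1 = \frac{z_x}{z_x^2 - zu_x^2}$ and $\psi_1 = \frac{zu_x}{zu_x^2 - z_x^2}$ given in the table, since these are exactly what the matrix produces from $\vect{0}{x}$ after one differentiation. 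Once the recursion and the base case match, uniqueness of the solution to \eqref{recursion} finishes the proof.
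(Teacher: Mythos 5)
Your overall strategy---induction on $m$, base case read off from \eqref{altEndPts1}--\eqref{altEndPts2} multiplied through by $x$, inductive step by differentiating the representation in $x$ and inverting a $2\times 2$ system---is the same as the paper's, and your base case is handled correctly. But your inductive step contains a concrete error: you write $\partial_x F = xV^{(m+2)}(\cdot)(u_x+z_xT^{-1}) + V^{(m+1)}(\cdot)$, attributing the last term to differentiating the explicit prefactor $x$. There is no such term. Since $V(\lambda)=\frac{1}{x}\left(\frac12\lambda^2+\sum_j t_j\lambda^j\right)$, the combination $xV^{(m+1)}(\lambda)$ has \emph{no} explicit $x$-dependence at all---this is exactly why the paper multiplies through by $x$---so with $Y=T+u+zT^{-1}$ one simply has $\partial_x F = xV^{(m+2)}(Y)(u_x+z_xT^{-1})$. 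The phantom $V^{(m+1)}$ term cannot be rewritten as ``integer powers of $T$ times $xV^{(m+2)}(\cdot)$'' as your plan requires, and carrying it along would prevent the recursion from closing with the stated matrix.

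The second, independent gap is the algebraic tool you invoke. After applying the matrix in \eqref{recursion} to $\partial_x\vect{\phi_m}{\psi_m}$ one is left with the terms $[T^0]_\infty\, xV^{(m+2)}(Y)\,u_xz_x(T-zT^{-1})$ and $[T^0]_\infty\, xV^{(m+2)}(Y)\,zz_x^2T^{-1}$. These involve $[T^{\pm1}]_\infty\, xV^{(m+2)}(Y)$, which are \emph{not} a priori expressible through $\phi_{m+1}=[T^0]_\infty xV^{(m+2)}(Y)$ and $\psi_{m+1}=[T^0]_\infty TxV^{(m+2)}(Y)$; merely ``shifting which Laurent coefficient we extract'' leaves you with three unknowns and two target quantities, so the $2\times2$ system you describe does not close. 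The identity you are missing is
\begin{align}
[T^0]_\infty\, F(Y)\,T^{p} =& [T^0]_\infty\, F(Y)\,z^{p}T^{-p}, \nonumber
\end{align}
which follows from the invariance of $Y=T+u+zT^{-1}$ under $T\mapsto z/T$. With $p=1$ this annihilates the cross term $u_xz_x(T-zT^{-1})$ and converts $zz_x^2T^{-1}$ into $z_x^2T$, after which dividing by $z_x^2-zu_x^2$ yields exactly $\vect{\phi_{m+1}}{\psi_{m+1}}$. With the corrected derivative and this symmetry added, your argument becomes the paper's proof.
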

\begin{proof}
We proceed by induction on $m$.  The base case $m=0$ immediate from the definitions of $\phi_0,\psi_0$ and equations (\ref{altEndPts1}), (\ref{altEndPts2}).  For the inductive step, introduce the notation $Y=T+u+zT^{-1}$ and differentiate with respect to $x$ in (\ref{target 2}).
\begin{align}
\partial_x \vect{\phi_m}{\psi_m} =& [T^0]_{\infty}\, \vect{xV^{(m+1)}(Y)(u_x + z_x T^{-1})}{ xV^{(m+1)}(Y)(u_x T + z_x )}  \label{rec b1}
\end{align}
Multiplying on the left by the matrix appearing in (\ref{recursion}), we obtain 
\begin{align}
 \matr{-z u_x }{z_x}{z z_x}{-z u_x} \partial_x  \vect{\phi_m}{\psi_m} =&
 [T^0]_{\infty} \vect{ xV^{(m+1)}(Y)(-zu_{x}^2 +z_{x}^2 +u_x z_x (T-zT^{-1}) }
{xV^{(m+1)}(Y)(z z_{x}^2 T^{-1} -z u_{x}^2 T)} \\
=& [T^0]_{\infty} \vect{ xV^{(m+1)}(Y)(-zu_{x}^2 +z_{x}^2) }
{xV^{(m+1)}(Y)(z_{x}^2 -z u_{x}^2) T} \label{lemma1 eq 3}
\end{align}
The simplification on the right hand side came from applying the identity below (with $p=1$).
\begin{align}
\intinfty Y^j T^p \frac{dT}{T} =& \intinfty Y^j \frac{z^p}{T^p} \, \frac{dT}{T}
\end{align}
This identity can be proven by changing variables $T\mapsto z/T$, and then observing that the integrand has the same index zero Laurent coefficient at $\infty$ as at $0$.  
\end{proof}


\subsection{Second formula for $\phi_m$ and $\psi_m$}
\begin{lemma} The following identities hold for all integers $m\geq 0$.
\begin{align}
\left(-\partial_T \circ \frac{1}{1-T^{-2}}\right)^m \frac{1}{T} =& \frac{m!}{(T-T^{-1})^{2m}} \sum_{l=0}^m \binom{m}{l}^2 T^{2l-m-1}  \label{c ident 1} \\
\left(-\partial_T \circ \frac{1}{1-T^{-2}}\right)^m 1 =& \frac{m!}{(T-T^{-1})^{2m}} \sum_{l=0}^m \binom{m-1}{l}\binom{m+1}{l+1} T^{2l-m}  \label{c ident 2}
\end{align}
\end{lemma}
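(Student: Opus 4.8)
The plan is to prove both identities by induction on $m$, applying the operator $L:=-\partial_T\circ\frac{1}{1-T^{-2}}$ to the claimed right-hand sides one level at a time. For $m=0$ both statements are immediate, since the right-hand sides collapse to $T^{-1}$ and to $1$ (using the convention $\binom{-1}{0}=1$). For the inductive step the one useful algebraic fact is the elementary identity $(1-T^{-2})(T-T^{-1})^{2m}=T^{-1}(T-T^{-1})^{2m+1}$, which gives, for any Laurent polynomial $P$,
\begin{align}
\frac{1}{1-T^{-2}}\cdot\frac{P(T)}{(T-T^{-1})^{2m}}=\frac{TP(T)}{(T-T^{-1})^{2m+1}}. \nonumber
\end{align}
Applying this with $P(T)=\sum_{l}\binom{m}{l}^2T^{2l-m-1}$ (for the first identity) or $P(T)=\sum_{l}\binom{m-1}{l}\binom{m+1}{l+1}T^{2l-m}$ (for the second), and then differentiating with $-\partial_T$ using $\partial_T(T-T^{-1})^{-(2m+1)}=-(2m+1)(1+T^{-2})(T-T^{-1})^{-(2m+2)}$, produces an expression $N(T)/(T-T^{-1})^{2m+2}$ with $N(T)$ an explicit Laurent polynomial.

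Next I would extract the coefficient of each power $T^{2j-m-2}$ in $N(T)$ and match it against the coefficient predicted by the right-hand side at level $m+1$. For the first identity this reduces, after using $\binom{m+1}{j}=\binom{m}{j}+\binom{m}{j-1}$, to the family of identities
\begin{align}
(m+1-j)\binom{m}{j-1}^2+j\binom{m}{j}^2=(m+1)\binom{m}{j}\binom{m}{j-1}, \nonumber
\end{align}
each of which follows at once from $\binom{m}{j-1}/\binom{m}{j}=j/(m-j+1)$: dividing through by $\binom{m}{j}\binom{m}{j-1}$ turns it into $(m+1-j)\tfrac{j}{m-j+1}+j\tfrac{m-j+1}{j}=j+(m-j+1)=m+1$. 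For the second identity the analogous reduction (using the Pascal rules for $\binom{m}{j}$ and $\binom{m+2}{j+1}$) leads to
\begin{align}
(2m-2j+1)\binom{m-1}{j-1}\binom{m+1}{j}+(2j+1)\binom{m-1}{j}\binom{m+1}{j+1}=(m+1)\binom{m-1}{j-1}\binom{m+1}{j+1}+(m+1)\binom{m-1}{j}\binom{m+1}{j}, \nonumber
\end{align}
and, writing $p=j$, $q=m-j$, dividing by $\binom{m-1}{j}\binom{m+1}{j}$ and using the ratios $\binom{m-1}{j-1}/\binom{m-1}{j}=p/q$ and $\binom{m+1}{j+1}/\binom{m+1}{j}=(q+1)/(p+1)$, this collapses to the trivial polynomial identity $(s+1)(2pq+s)=2pqs+s^2+2pq+s$ with $s=p+q$. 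The boundary powers $j=0$ and $j=m+1$ are consistent automatically because the relevant binomial coefficients vanish there.

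The difficulty here is entirely bookkeeping: the factor $T$ picked up from $\frac{1}{1-T^{-2}}$ shifts exponents, so the two monomial families making up $N(T)$ contribute to a given power $T^{2j-m-2}$ with different index offsets, and one has to line these up correctly before the binomial identities become visible; I would guard against slips by checking the case $m=1$ directly against $L(T^{-1})$ and $L(1)$. There is a slicker but less self-contained route worth recording as a remark: one has $\sum_{l}\binom{m}{l}^2T^{2l-m}=(T-T^{-1})^mP_m\!\big(\tfrac{T+T^{-1}}{T-T^{-1}}\big)$ with $P_m$ the Legendre polynomial, and, with $c=\tfrac{T+T^{-1}}{T-T^{-1}}$, the operator $L$ sends $\dfrac{g(c)}{T(T-T^{-1})^m}$ to $\dfrac{(c^2-1)g'(c)+(m+1)cg(c)}{T(T-T^{-1})^{m+1}}$; since the right-hand side of the first identity is exactly $\dfrac{m!\,P_m(c)}{T(T-T^{-1})^m}$, the induction then follows from Bonnet's recursion together with $(c^2-1)P_m'=m(cP_m-P_{m-1})$, and the second identity is obtained the same way with the factor $1/T$ stripped off and the recursion modified accordingly. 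I would nonetheless present the direct induction as the actual proof.
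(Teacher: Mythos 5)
Your proposal is correct and follows essentially the same route as the paper: induction on $m$, applying the operator $-\partial_T\circ\frac{1}{1-T^{-2}}$ directly to the level-$m$ right-hand side and matching Laurent coefficients via Pascal-type binomial identities (your identity $(m+1-j)\binom{m}{j-1}^2+j\binom{m}{j}^2=(m+1)\binom{m}{j}\binom{m}{j-1}$ is exactly what the paper's ``verify using Pascal's identity'' step amounts to). You merely spell out the coefficient bookkeeping in more detail and add an optional Legendre-polynomial remark; no substantive difference.
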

\begin{proof}
We prove (\ref{c ident 1}) by induction on $m$.  The base case $m=0$ is trivial.  The inductive step is a direct calculation.
\begin{align}
& -\partial_T \left(\frac{1}{1-T^{-2}} \frac{m!}{(T-T^{-1})^{2m}} \sum_{l=0}^m \binom{m}{l}^2 T^{2l-m-1}  \right)\nonumber  \\
& \qquad = \frac{m!}{(T-T^{-1})^{2m+2}} \sum_{l=0}^m T^{2l-m} \left( (3m-2l+1)\binom{m}{l}^2 +(2l+m+3)\binom{m}{l+1}^2 \right) \nonumber  \\
& \qquad =\frac{m!}{(T-T^{-1})^{2m+2}}  \sum_{l=0}^m T^{2l-m} (m+1)\binom{m+1}{l+1}^2
\end{align}
The last step of the above calculation can be verified by using Pascal's identity to express all the binomial coefficients as multiples of $\binom{m}{l}$.  An analogous proof can be given for (\ref{c ident 2}).
\end{proof}

\begin{lemma} Repeated integration by parts in formulas (\ref{target 2}) yields the following formulas.
\begin{align}
\phi_m =& [T^0]_{\infty} \frac{\sqrt{z}^{-m}xV'(\sqrt{z} \,T+u+\sqrt{z}\,T^{-1})m!}{(T-T^{-1})^{2m}}\sum_{l=0}^m \binom{m}{l}^2 T^{2l-m} \\
\psi_m =& [T^0]_{\infty} 
\frac{\sqrt{z}^{1-m}xV'(\sqrt{z} \,T+u+\sqrt{z}\,T^{-1})m!}{(T-T^{-1})^{2m}}\sum_{l=0}^m \binom{m-1}{l}\binom{m+1}{l+1} T^{2l-m+1}
\end{align}
\end{lemma}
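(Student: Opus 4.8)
The plan is to derive the two formulas of Lemma~3 from the integral representations (\ref{target 2}) of Lemma~1 by rescaling the contour variable, using the chain rule to express high derivatives of $V$ through $V'$, integrating by parts $m$ times, and finally invoking Lemma~2.

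First I would substitute $T\mapsto\sqrt{z}\,T$ in (\ref{target 2}). Since $\sqrt{z}$ is independent of $T$, the small circle around $\infty$ and the form $dT/T$ are unchanged, the argument $T+u+zT^{-1}$ becomes $Y:=\sqrt{z}\,T+u+\sqrt{z}\,T^{-1}$, and in the $\psi_m$-line the explicit factor $T$ becomes $\sqrt{z}\,T$ and so contributes an extra $\sqrt{z}$; thus $\phi_m=[T^0]_{\infty}\,xV^{(m+1)}(Y)$ and $\psi_m=\sqrt{z}\,[T^0]_{\infty}\,TxV^{(m+1)}(Y)$. Next I would use $\partial_T\bigl(V'(Y)\bigr)=\sqrt{z}(1-T^{-2})V''(Y)$, and more generally its iterate
\[
V^{(m+1)}(Y)=\sqrt{z}^{-m}\left(\frac{1}{1-T^{-2}}\,\partial_T\right)^m V'(Y),
\]
to move all but one of the derivatives onto a differential operator acting in the variable $T$.

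Then I would write $[T^0]_{\infty}$ as $\frac{1}{2\pi i}\oint_{\infty}(\,\cdot\,)\frac{dT}{T}$ and integrate by parts $m$ times so as to transfer the operator $\bigl(\frac{1}{1-T^{-2}}\partial_T\bigr)^m$ off of $V'(Y)$. All integrands are rational functions of $T$ (recall $V'$ is a polynomial, so $V'(Y)$ is a Laurent polynomial in $T$), and the integral of an exact differential over a closed contour vanishes, so there are no boundary terms; hence $\frac{1}{1-T^{-2}}\partial_T$ is adjoint to $-\partial_T\circ\frac{1}{1-T^{-2}}$ with respect to the pairing $\oint_{\infty} f\,g\,dT$. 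This yields
\[
\phi_m=\sqrt{z}^{-m}x\,\frac{1}{2\pi i}\oint_{\infty} V'(Y)\left(-\partial_T\circ\frac{1}{1-T^{-2}}\right)^m\frac{1}{T}\;dT,
\]
together with the same identity for $\psi_m$ in which $\frac{1}{T}$ is replaced by $1$ and an extra factor $\sqrt{z}$ is inserted. Now Lemma~2, equations (\ref{c ident 1}) and (\ref{c ident 2}), evaluates $\bigl(-\partial_T\circ\frac{1}{1-T^{-2}}\bigr)^m\frac{1}{T}$ and $\bigl(-\partial_T\circ\frac{1}{1-T^{-2}}\bigr)^m 1$ in closed form; substituting these and absorbing one factor of $T$ so as to turn $\frac{1}{2\pi i}\oint_{\infty}F\,dT$ back into $[T^0]_{\infty}(TF)$ produces exactly the asserted expressions, including the prefactors $\sqrt{z}^{-m}$ and $\sqrt{z}^{1-m}$ and the shifted powers $T^{2l-m}$ and $T^{2l-m+1}$.

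I do not expect a genuine obstacle here, since the argument is a direct computation. The only points demanding care are the bookkeeping of the powers of $\sqrt{z}$ (one for each derivative transferred via the chain rule in the second step, plus the single extra power coming from the rescaled prefactor $T$ in $\psi_m$), the one-unit shift in $T$-exponents incurred whenever a factor of $T$ is moved across the residue symbol, and the verification that the integrations by parts in the third step really produce no boundary contributions — which holds because every integrand is a rational function with no poles on the contour.
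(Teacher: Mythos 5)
Your proposal is correct and follows essentially the same route as the paper: the paper likewise integrates by parts $m$ times against the closed contour (multiplying and dividing by $1-T^{-2}$ so that each step integrates $V^{(r+1)}$ down to $\sqrt{z}^{-1}V^{(r)}$) and then invokes Lemma 2 to evaluate $\bigl(-\partial_T\circ\frac{1}{1-T^{-2}}\bigr)^m$ applied to $T^{-1}$ and to $1$. Your version merely makes explicit the rescaling $T\mapsto\sqrt{z}\,T$, the adjointness argument, and the $\sqrt{z}$ bookkeeping that the paper leaves implicit.
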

\begin{proof}
The operator $[T^0]_\infty$ is an integral along a closed contour, so we may integrate by parts with no boundary terms.  For each integration by parts we multiply and divide by $(1-T^{-2})$ so that we may use 
\begin{align}
\int V^{(r+1)}(\sqrt{z}T + u + \sqrt{z}T^{-1})(1-T^{-2})\, dT =& \sqrt{z}^{-1}V ^{(r)}(\sqrt{z}T + u + \sqrt{z}T^{-1}). 
\end{align}
The result of applying integration by parts $m$ times in this way is thus given by the previous lemma.
\end{proof}

\subsection{Existence of the coefficients $c^{(\phi)}_{k,m}$ and $c^{(\psi)}_{k,m}$}

To complete the proof of our formula for $h(\lambda)$ we must show that there exist coefficients $c^{(\phi)}_{k,m},c^{(\psi)}_{k,m}$ so that (\ref{c phi psi def}) holds.  Define the following linear spaces.
\begin{align}
V_k = \{ & \text{rational functions $f(T)$ such that }   f(\infty)=0, \nonumber \\
& \text{and }(T-1)^k(T+1)^k f(T) \text{is an entire function} \}, \nonumber
\end{align}
Also define the following two sequences of functions.
\begin{align}
\widetilde{\phi}_m =& \frac{m!}{(T-T^{-1})^{2m}}\sum_{l=0}^m \binom{m}{l}^2 T^{2l-m} \\
\widetilde{\psi}_m=& \frac{m!}{(T-T^{-1})^{2m}}\sum_{l=0}^{m} \binom{m-1}{l}\binom{m+1}{l+1} T^{2l+1-m}.
\end{align}
We claim that $\{\widetilde{\phi}_m, \widetilde{\psi}_m\}_{m=1}^k$ span $V_k$.  Since the dimension of $V_k$ is $2k$, it suffices to show that they are linearly independent.  The leading coefficients of the Laurent expansions of $\widetilde{\phi}_m,\widetilde{\psi}_m$ about $\pm 1$ can be computed explicitly using the identities
\begin{align}
\sum_{l=0}^m \binom{m}{l}^2 =& \binom{2m}{m}, \label{c ident 24a} \\
 \sum_{l=0}^{m-1} \binom{m-1}{l-1} \binom{m+1}{l+1} =& \binom{2m}{m}, \label{c ident 24b} \\
\sum_{l=0}^m \binom{m}{l}^2 (-1)^l =& \begin{cases} (-1)^{m/2} \binom{m}{m/2} & \text{ if $m$ even} \\ 0 & \text{ if $m$ odd} \end{cases}, \label{c ident 24c} \\
 \sum_{l=0}^{m-1} \binom{m-1}{l-1} \binom{m+1}{l+1} (-1)^l =& \begin{cases} 0 & \text{ if $m$ even} \\ (-1)^{(m-1)/2}2^m \frac{(m-2)!!}{(m-1)!!} & \text{ if $m$ odd} \end{cases} \label{c ident 24d}.
\end{align}
The above formulas are all easily proven by induction on $m$.  Formulas (\ref{c ident 24a}), (\ref{c ident 24b}) show that $\widetilde{\phi}_m,\widetilde{\psi}_m$ have nonvanishing Laurent coefficients of order $2m$ at $T=1$, thus neither function is in the space $V_{k-1}$.  Formulas (\ref{c ident 24c}), (\ref{c ident 24d}) show that $\widetilde{\phi}_m,\widetilde{\psi}_m$ have different Laurent coefficients at $T=-1$, and thus are linearly independent.
Existence of $c^{(\phi_)}_{k,m},c^{(\psi)}_{k,m}$ is now clear since and formula (\ref{c phi psi def}) expresses $(T-2+T^{-1})^{-k-1}$ in the basis $\{\widetilde{\phi}_m,\widetilde{\psi}_m\}_{m=1}^{k+1}$ for the space $V_{k+1}$.


\section{Formula for $h(z)$, even polynomial potential}
In the case that the potential is an even function we have $u=0$.  Thus equation (\ref{Ik formula1}) becomes
\begin{align}
h(\lambda) =&\sum_{k\geq 0} (\lambda-2\sqrt{z})^k I_k \label{even h 1} \\ 
I_k=& [T^0]_{\infty}  \frac{xV'\left(\sqrt{z}\, T  +\sqrt{z}\, T^{-1}\right)}{\sqrt{z}^{k+1}(T-2+ T^{-1})^{k+1}}. \nonumber  
\end{align}
Our formula for $h(\lambda)$ for even potentials does not obviously follow from the formula for general potentials.  This is because we do not have closed form formulas for $c^{(\phi)}_{k,m},c^{(\psi)}_{k,m}$.

Let us explain the following calculation, which outlines our proof of formula (\ref{hFormula}).
First we project the integrand onto the even part of its Laurent series at infinity; this does not change the value of the integral.  
Since the $V'(\lambda)$ is an odd function, this amounts to taking the odd part of $(T-2+T^{-1})^{-k-1}$ at infinity.  
Next we apply a combinatorial identity which we will state and prove as lemma 4 in section 3.2.
Finally we apply a residue formula which will be proven in section 3.1.
\begin{align}
I_k =& [T^0]_{\infty} \sqrt{z}^{-k-1}xV'(\sqrt{z}\,T+\sqrt{z}\,T^{-1}) \frac{T^{k+1}}{2}\left( \frac{1}{(T-1)^{2k+2}} + \frac{(-1)^k}{(T+1)^{2k+2}}\right)\nonumber \\  
=& [T^0]_{\infty} \sqrt{z}^{-k-1}xV'(\sqrt{z}\,T+\sqrt{z}\,T^{-1}) \sum_{m=0}^k 2^{4m-2k}\binom{m}{k-m} \frac{T+T^{-1}}{(T-T^{-1})^{2m+2}} \label{1357} \\
=& \sum_{m=0}^k 2^{4m-2k} \binom{m}{k-m} \sqrt{z}^{-k-1} [T^0]_{\infty} xV'(\sqrt{z}\,T+\sqrt{z}\,T^{-1}) \frac{T+T^{-1}}{(T-T^{-1})^{2m+2}}  \nonumber \\
=&\sum_{m=0}^k 2^{4m-2k} \binom{m}{k-m} \sqrt{z}^{-k-1} \frac{\sqrt{z}^{2m+1}}{2^m(2m+1)!!}  \left(\frac{1}{z_x}\partial_x\right)^m\frac{1}{z_x} \label{2468}
\end{align}

Thus to complete the proof of formula (\ref{hFormula}) it suffices to prove formulas in sections 3.1 and 3.2 to justify lines (\ref{1357}) and (\ref{2468}), respectively.

\subsection{Residue formula justifying equality (\ref{1357})}
In the special case of an even potential, we have $\alpha_\pm =- \alpha_\mp$; thus we need only the single equation (\ref{endPts2}) to determine the endpoints of support of the equilibrium measure, which specializes ($u=0$) to
\begin{align}
x=& \,[y^0]_{\infty} \, \frac{y^2 xV'(y)/2}{\sqrt{y^2-4z}}.
\end{align}
We easily obtain (by induction on $m$) a formula for the result of applying $(\frac{1}{z_x}\partial_x)^m$ to both sides.  We then making a change of variables $y=\sqrt{z}\,T+ \sqrt{z}\,T^{-1}$; the resulting formula justifies equation (\ref{1357}).
\begin{align}
\left(\frac{1}{z_x} \partial_x\right)^m x =& 2^m (2m-1)!!\, [y^0]_\infty \, \frac{y^2 xV'(y)/2}{(y^2-4z)^{m+1/2}} \\
=& 2^{m-1}(2m-1)!!\sqrt{z}^{1-2m}\, [T^{0}]_{\infty}\, xV'(\sqrt{z}\,T+\sqrt{z}\, T^{-1}) \frac{T+T^{-1}}{(T-T^{-1})^{2m}}
\end{align}

\subsection{Combinatorial identity justifying equality (\ref{2468})}

\begin{lemma}
The following identity holds for all integers $k\geq 0$.
\begin{align}
\frac{T^{k+1}}{2}\left(\frac{1}{(T-1)^{2k+2}} + \frac{(-1)^k }{(T+1)^{2k+2}}\right)
=& \sum_{m=0}^k 2^{4m-2k} \binom{m}{k-m} \frac{T+T^{-1}}{(T-T^{-1})^{2m+2}} \label{c proof eq 1}
\end{align}
\end{lemma}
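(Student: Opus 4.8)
The plan is to clear the spurious powers of $T$ from the left-hand side, reduce the whole identity to a rational identity in the single variable $s = T + T^{-1}$, and then recognize the result as a standard Fibonacci/Lucas-polynomial identity.

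First I would record the elementary factorizations $(T-1)^2 = T(T-2+T^{-1})$, $(T+1)^2 = T(T+2+T^{-1})$ and $(T-T^{-1})^2 = T^2-2+T^{-2}$, together with $T-2+T^{-1} = s-2$, $T+2+T^{-1} = s+2$, $(T-T^{-1})^2 = s^2-4$ and $T+T^{-1} = s$. Substituting these into (\ref{c proof eq 1}), the factor $T^{k+1}$ cancels against $(T-1)^{2k+2} = T^{k+1}(s-2)^{k+1}$ and $(T+1)^{2k+2} = T^{k+1}(s+2)^{k+1}$, so the left-hand side becomes $\tfrac12\bigl((s-2)^{-k-1} + (-1)^k(s+2)^{-k-1}\bigr)$ and the right-hand side becomes $\sum_m 2^{4m-2k}\binom{m}{k-m} s\,(s^2-4)^{-m-1}$. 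Multiplying through by the common denominator $(s^2-4)^{k+1} = (s-2)^{k+1}(s+2)^{k+1}$, reindexing the sum by $n = k-m$ (so $2^{4m-2k} = 4^{k-2n}$, $\binom{m}{k-m} = \binom{k-n}{n}$, and terms with $2n>k$ vanish automatically), and using $(-1)^k(s-2)^{k+1} = -(2-s)^{k+1}$ to combine the two terms on the left, the identity reduces to
\[
(s+2)^{k+1} - (2-s)^{k+1} = 2s\sum_{n=0}^{\lfloor k/2\rfloor} 4^{k-2n}\binom{k-n}{n}(s^2-4)^n .
\]

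Then I would prove this reduced identity. The cleanest route is the classical observation that $\sum_{n\ge 0}\binom{k-n}{n}x^{k-2n}y^n$ has ordinary generating function $\sum_{k\ge 0}(\cdots)\,t^k = (1-xt-yt^2)^{-1}$; factoring $1-xt-yt^2 = (1-\lambda_+t)(1-\lambda_-t)$, where $\lambda_\pm$ are the roots of $z^2-xz-y$, and doing partial fractions gives $\sum_n\binom{k-n}{n}x^{k-2n}y^n = (\lambda_+^{k+1}-\lambda_-^{k+1})/(\lambda_+-\lambda_-)$. Taking $x=4$ and $y=s^2-4$, the quadratic is $z^2-4z-(s^2-4) = (z-(2+s))(z-(2-s))$, so $\lambda_\pm = 2\pm s$, $\lambda_+-\lambda_- = 2s$, and the sum equals $\bigl((2+s)^{k+1}-(2-s)^{k+1}\bigr)/(2s)$, which is exactly what is needed. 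Alternatively one can argue by induction on $k$: Pascal's rule $\binom{k-n}{n} = \binom{k-1-n}{n} + \binom{k-1-n}{n-1}$ gives the three-term recurrence $P_k = 4P_{k-1} + (s^2-4)P_{k-2}$ for $P_k(s) = \sum_n 4^{k-2n}\binom{k-n}{n}(s^2-4)^n$, while $(2\pm s)^2 = 4(2\pm s) + (s^2-4)$ shows that $Q_k(s) = (s+2)^{k+1}-(2-s)^{k+1}$ obeys the same recurrence $Q_k = 4Q_{k-1} + (s^2-4)Q_{k-2}$; since $Q_0 = 2sP_0 = 2s$ and $Q_1 = 2sP_1 = 8s$, it follows that $Q_k = 2sP_k$ for every $k$.

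The computation is essentially routine. The only place needing care is the sign bookkeeping around the factor $(-1)^k$ and the distinction between $(s-2)^{k+1}$ and $(2-s)^{k+1}$, and—if one takes the inductive route—checking that the $\lfloor k/2\rfloor$ truncation is compatible with Pascal's rule, which it is, since the out-of-range binomial coefficients vanish and the index shift $n\mapsto n-1$ introduces no boundary term. I expect the substitution $s = T + T^{-1}$ to be the one genuinely useful idea; everything after it is classical.
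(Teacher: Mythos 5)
Your proof is correct, and it takes a genuinely different route from the paper's. The paper substitutes $k=2n+1$ (treating odd and even $k$ separately), clears denominators to get a polynomial identity in $T$, and then verifies that both sides satisfy the same three-term recurrence in $n$ --- where establishing the recurrence for the sum side requires the machinery of summation by elimination (creative telescoping in the style of Aigner, \S 4.1), involving two successive eliminations of $q$ with rather heavy operator coefficients. You instead notice that after cancelling $T^{k+1}$ via $(T\mp 1)^2 = T(s\mp 2)$ the whole identity lives in the single variable $s=T+T^{-1}$, reducing it to
\begin{align*}
(s+2)^{k+1}-(2-s)^{k+1} \;=\; 2s\sum_{n=0}^{\lfloor k/2\rfloor} 4^{k-2n}\binom{k-n}{n}(s^2-4)^n,
\end{align*}
which is the classical Fibonacci/Lucas-polynomial evaluation $\sum_n\binom{k-n}{n}x^{k-2n}y^n=(\lambda_+^{k+1}-\lambda_-^{k+1})/(\lambda_+-\lambda_-)$ with $\lambda_\pm=2\pm s$; your sign bookkeeping ($(-1)^k(s-2)^{k+1}=-(2-s)^{k+1}$), the reindexing $n=k-m$, and the two closing arguments (generating function with partial fractions, or the induction $Q_k=4Q_{k-1}+(s^2-4)Q_{k-2}$ with matching initial data) all check out. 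What your approach buys: a uniform treatment of all $k$ with no parity split, no need for the telescoping apparatus, and an identification of the identity as a known one rather than an ad hoc recurrence verification. What the paper's approach buys is essentially nothing over yours here, except that it stays entirely within the variable $T$ in which the surrounding residue computations are carried out; your substitution $s=T+T^{-1}$ is the one genuinely clarifying idea and is, if anything, more in the spirit of the uniformization $y=\sqrt{z}\,T+u+\sqrt{z}\,T^{-1}$ already used elsewhere in the paper.
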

\begin{proof}

Consider the case that $k$ is odd. 
We let $2n+1=k$ and $q+n=m$, then clear denominators.  Equation (\ref{c proof eq 1}) becomes
\begin{align}
(T+1)^{4n+2}+(T-1)^{4n+2} =& 2(T^2+1) \sum_{q=0}^n \binom{n+q}{n-q}(4T)^{2q}(T^2-1)^{2n-2q}.
\end{align}
One easily checks that the left and right hand sides are equal for $n=0,1$.  The coefficients of the polynomials on the left hand side are the entries of Pascal's triangle with even row and column indices.  It is trivial to prove by induction that the left hand side satisfies the recurrence
\begin{align}
\text{LHS}(n+2) -2(1+6T^2+T^4) \text{LHS}(n+1) +(T^2-1)^4 \text{LHS}(n) =0
\end{align}
Thus it suffices to show that the right hand side also satisfies this recurrence.  We proceed using the method of summation by elimination (see section 4.1 of Aigner's book \cite{AignerBook}).  Let $N:n\mapsto n+1$ and $Q: q\mapsto q+1$ be the shift operators corresponding to $n,q$.  Define
\begin{align}
F(n,q) =& (T^2+1)\binom{n+q}{n-q} (4T)^{2q}(T^2-1)^{2n-2q}
\end{align}
We find the following relations on the function $F$
\begin{align}
0=& \left[(n-q-1)N - (n+q +1)(T^2-1)^2 \right]F(n,q)  \tag{a}\\
0\equiv & \left[ Q(2q)(2q-1)(T^2-1)^2 -(n+q+1)(n-q)16T^2 \right]F(n,q) & \mod Q-I \tag{b}
\end{align}
By congruent mod $Q-I$ we mean that the difference is divisible on the left by $Q-I$.
Let $\mathcal{A},\mathcal{B}$ be the linear operators acting on $F$ in equations (a), (b).  We take a linear combination of (a) and (b) to eliminate $q$.  By direct calculation, we find that
\begin{align}
&2(T^4+2T^2+1)k \,\mathcal{A} + \frac{1}{2}((T^2-1)^2+N)\,\mathcal{B} \nonumber \\
& \equiv -8(1+n)T^2(N(2+n) + n(T^2-1)^2) & \mod Q-I \tag{c} \\
&\qquad + q\big[N (T^4+14T^2 +1 +2n(T^4 +2T^2 +1)) \nonumber \\
& \qquad \qquad -(T^2-1)^2\left(3(T^2-1)^2 +2n (T^4 + 2T^2 +1)\right)\big] \nonumber  
\end{align}
This combination has eliminated the $q^2$ term from $\mathcal{B}$.  Let $\mathcal{C}$ be the right hand side of (c), and take a further linear combination to completely eliminate $q$.
\begin{align}
& \big[(n+1)(n+2)2(T^4+2T^2+1)((T^2-1)^2-N) \\
& \qquad +(T^4-10T^2+1)((n+1)N + (n+2)(T^2-1)^2) \big]\, \mathcal{A} \nonumber \\
& -[(n+1)N+(n+2)(T^2-1)^2]\mathcal{C} \nonumber \\
& \equiv -(6+13n+9n^2+2n^3)(T^2-1)^2 \left[N^2-2(1+6T^2 +T^{4})N +(T^2-1)^4\right] & \mod Q-I, \nonumber
\end{align}
By proposition 4.1 of \cite{AignerBook}, it follows that $N^2-2(1+6T^2 +T^{4})N +(T^2-1)^4$ is a recurrence operator for the sum $\sum_q F(n,q)$.  This proves our lemma in the case that $k$ is odd; an analogous proof holds in the case that $k$ is even.
\end{proof}


\section{Formula for $e_g$ in terms of the first correlator}

The purpose of this section is to derive a formula for $\partial_x e_g$ in terms of asymptotics of the first correlator for our matrix ensemble $dP_N(M)$.  We introduce necessary notations $\rho^{(1)}_N(\lambda),W_{1}^{(2g-1)}(y)$ and $\widetilde{\psi}(\lambda)$; then we state the formula for $\partial_x e_g$.

The $N^{th}$ one point correlation function $\rho^{(1)}_N(\lambda)$ is the marginal density for the distribution for a single eigenvalue sampled from a $dP_N$ random matrix 
\begin{align}
\rho^{(1)}_N(\lambda_1)=& \int_{\mathbb{R}^{N-1}}dP_N(\lambda_1,\ldots,\lambda_N)\, d\lambda_2 \ldots d\lambda_N.
\end{align}
As $N\rightarrow\infty$, the one point correlation functions converge to the equilibrium measure \cite{DeiftOPs}.  But the difference $\psi(\lambda)-\rho^{(1)}_N(\lambda)$ is rapidly oscillating in $N$.  For this reason, the one point correlation functions do not have an asymptotic expansion in powers of $N^{-1}$ (an expansion exists, but it has a more complicated form \cite{EMcL03}).  It is convenient to work with random matrix ``correlators'' which do have expansions in powers of $N^{-1}$.  The first correlator $W_{1,N}$ and its asymptotic coefficients $W_{1}^{(2g-1)}$ are defined by
\begin{align}
W_{1,N}(y)=& N \int \frac{\rho^{(1)}_N(\lambda)}{y-\lambda}\, d\lambda \\
\sim &  N W_{1}^{(1)}(y) + N^{-1} W_{1}^{(-1)}(y) + N^{-3} W_{1}^{(-3)}(y) + \ldots
\end{align}
Define an analytic function $\widetilde{\psi}$ with domain $\mathbb{C}\backslash [\alpha_-,\alpha_+]$ by
\begin{align}
\widetilde{\psi}_{+}(\lambda) =& \psi(\lambda) & \alpha_- < \lambda < \alpha_+ \label{psitilde def},
\end{align}
where the subscript $+$ indicates a boundary value from the upper half plane.  Equation (\ref{psitilde def}) defines $\widetilde{\psi}$ to be an analytic continuation of $\psi$.  

We now state the aforementioned formula for $\partial_x e_g$.  If $g\geq 1$ then
\begin{align}
\partial_x e_g =& C_g(x) - \frac{1}{x} \oint_C \widetilde{\psi}(y) \int_{y}^\infty W_{1}^{(1-2g)}(y')\, dy' \, dy. \label{e1eq1}
\end{align} 
The functions $C_g(x)$ can be thought of as constants since they do not depend on $t$.  Since the functions $e_g(t)$ are generating functions, we have an identity $e_g(t=0)=0$;  this expresses the fact that a map must have at least one vertex.  Thus one can calculate the constant $C_g(x)$ in (\ref{e1eq1}) by evaluating both sides at $t=0$.

\subsection{Outline of $\partial_x e_g$ calculation}
We will need to restrict a domain of integration from $\mathbb{R}$ to some finite interval.  Let $a_{+}= \alpha_{+} + 1$ and $a_-= \alpha_- -1$.  The ``1'' is arbitrary in the sense that we could have used any positive constant.  We now give an outline of our proof of formula (\ref{e1eq1}).  No step of the following calculation is immediate, and each will be justified in its own subsection.
\begin{align}
\frac{x}{N^2} \partial_x \log Z_N -V(a_+) \sim & -\int_{a_-}^{a_+} V'(\lambda) \irho \, d\lambda \label{42eq} \\
=& -\oint_\infty \widetilde{\psi}(y) \int_{a_-}^{a_+}  \frac{ \irho }{\lambda -y} \, d\lambda \, dy \label{43eq} \\
\sim& - \oint_\infty \widetilde{\psi}(y)\left[ \log\left(1-a_+/y\right)+ \int_{y}^\infty \int_{\mathbb{R}} \frac{\lambda \psi(\lambda)/y'}{y'-\lambda} \, d\lambda\, dy'\right] \, dy \nonumber \\
& -  \sum_{g\geq 1}N^{-2g} \oint_\infty \widetilde{\psi}(y) \int_{y}^\infty  W^{(1-2g)}_1(y') \, dy' \, dy \label{44eq}
\end{align}
Lines (\ref{42eq})-(\ref{44eq}) of the above calculation are justified in sections 4.2-4.4, respectively.  In section 4.5 we will obtain (\ref{e1eq1}) by extracting coefficients from both sides of (\ref{44eq}).

\subsection{Justification of formula (\ref{42eq})}

The partition function is defined by
\begin{align}
Z_N =& \int_{\mathbb{R}^N} \exp\left(-N \sum_{i=1}^N V(\lambda_i)\right)\, \Delta^2(\lambda) \, d^N\lambda.
\end{align}
Recall that $V(\lambda)$ depends on $x$.  Differentiating with respect to $x$ we obtain a relation between the partition function and the one point correlation function.
\begin{align} 
\partial_x \log Z_N =& \frac{1}{Z_n} \int_{\mathbb{R}^N} \left(\frac{N}{x}\sum_{i=1}^N V(\lambda_i)\right) e^{-N\sum_{i=1}^N V(\lambda_i)}\Delta^2 \lambda \, d^N \lambda \\
=& \frac{N^2}{x} \int_{\mathbb{R}} V(\lambda)\rho^{(1)}_N(\lambda)\, d\lambda \label{dx Zn 1}
\end{align}

  Because the one point correlation functions are decaying uniformly exponentially in $N$ for $\lambda$ in any set bounded away from the support of the equilibrium measure, the contribution to the integral below from $\mathbb{R}\backslash [a_-,a_+]$ is beyond all asymptotic orders.  It follows that

\begin{align}
\frac{x}{N^2}\partial_x \log Z_N 
\sim & \int_{a_-}^{a_+}  V(\lambda) \rho^{(1)}_N(\lambda)\, d\lambda \nonumber  \\
=&  \left. V(\lambda) \irho  \right|_{\lambda=a_-}^{a_+} - \int_{a_-}^{a_+}  V'(\lambda) \irho \, d\lambda \nonumber  \\
\sim &  V(a_+) - \int_{a_-}^{a_+} V'(\lambda) \irho \, d\lambda. \label{Zeq1}
\end{align}
For the last step we used the fact that 
\begin{align}
\int_{a_-}^{a_+} \rho^{(1)}(\lambda)\, d\lambda =& 1 + O(e^{-CN})
\end{align}
for some constant $C>0$.

\subsection{Applying the variational equality}
In this section we prove formula (\ref{43eq}).  We use the following formula relating the the potential and the equilibrium measure: for all $\lambda\in \mathbb{C}$ we have
\begin{align}
 V'(\lambda) =& \int_{\infty} \frac{\widetilde{\psi}(y)}{ \lambda-y } \, dy. \label{inverse eq meas}
\end{align}
This formula is an immediate consequence of formula (2.6) in \cite{Bertola BCurves}.  However, since (\ref{inverse eq meas}) seems to appear less frequently in the literature than other characterizations of the equilibrium measure, we give a derivation.  
\begin{proof}[Proof of formula (\ref{inverse eq meas})]
First suppose $\alpha_- < \lambda < \alpha_+$.  Starting with the variational equality for the equilibrium measure, we use $\widetilde{\psi}_+(\lambda)=-\widetilde{\psi}_-(\lambda)$ to rewrite the integral from $\alpha_-$ to $\alpha_+$ as a contour integral.  Then we move deform the contour to a small loop around $\infty$, picking up the residue at $\lambda+i\epsilon$ as we go.
\begin{align}
 V'(\lambda) =& \left( \lim_{\epsilon \uparrow 0} +\lim_{\epsilon \downarrow 0} \right) \int_{\alpha_-}^{\alpha_+} \frac{\psi(y)}{\lambda+ i\epsilon -y}\, dy \nonumber \\
=& \left( \lim_{\epsilon \uparrow 0} +\lim_{\epsilon \downarrow 0} \right) \frac{-1}{2} \oint_{C_\epsilon} \frac{\widetilde{\psi}(y)}{\lambda+i \epsilon-y}\, dy \nonumber \\
=&\left( \lim_{\epsilon \uparrow 0} +\lim_{\epsilon \downarrow 0} \right) \frac{-1}{2} \left( -2\pi i \widetilde{\psi}(\lambda + i\epsilon)- \oint_{\infty}\frac{\widetilde{\psi}(y)}{\lambda+i \epsilon-y}\, dy \right) \nonumber  \\ 
=&\int_{\infty} \frac{\widetilde{\psi}(y)}{ \lambda- y} \, dy
\end{align}
By our assumption that $\alpha_- < \lambda < \alpha_+$, the $\widetilde{\psi}(\lambda +i \epsilon)$ terms from each limit cancel.  Equality for all $\lambda\in \mathbb{C}$ follows by analytic continuation since both sides of (\ref{inverse eq meas}) are entire functions.
\end{proof}

We now use (\ref{inverse eq meas}) in formula (\ref{Zeq1}).  

\begin{align}
\frac{x}{N^2} \partial_x \log Z_N - V(a_+)=&
- \int_{a_-}^{a_+} V'(\lambda) \irho \, d\lambda \nonumber \\
=& -\int_{a_-}^{a_+} \oint_{\infty} \frac{\widetilde{\psi}(y)}{\lambda -y}\, dy \irho \, d\lambda \nonumber \\
=&  -\oint_\infty \widetilde{\psi}(y) \int_{a_-}^{a_+}  \frac{ \irho }{\lambda -y} \, d\lambda \, dy \label{Zeq2}
\end{align}
This proves formula (\ref{43eq}).

\subsection{Commuting an integral and a Cauchy transform}
We want to express (\ref{Zeq2}) in terms of the first correlator $W_1$.
The Cauchy transform commutes with differentiation, so we calculate the ``price'' of moving commuting the indefinite integral and Cauchy transform in (\ref{Zeq2}).
\begin{align}
\partial_y \int_{a_-}^{a_+} \frac{\irho}{\lambda-y}\, d\lambda 
=& \int_{a_-}^{a_+} -\partial_\lambda \left( \frac{1}{\lambda-y}\right) \irho \, d\lambda \nonumber \\
=&  \frac{\int_{a_-}^{a_+} \rho^{(1)}_N}{y- a_+} + \int_{a_-}^{a_+} \frac{\rho^{(1)}_N(\lambda) }{\lambda-y}\, d\lambda \nonumber \\
\sim & \frac{1}{y-a_+} + \int_{\mathbb{R}} \frac{\rho^{(1)}_N(\lambda)}{\lambda-y} \, d\lambda \label{cRho1}
\end{align}
We integrate (\ref{cRho1}) with $y$ going from some $C>b$ to $y$.  Preparing to take a limit $C\rightarrow\infty$, we state an identity which will group terms for cancellation as $C\rightarrow\infty$.
\begin{align}
\log\left(\frac{y}{C}\right) =& \int_{C}^y \int_{\mathbb{R}} \frac{\psi(\lambda)}{y'}\, d\lambda\, dy'.  \label{psi ident 1}
\end{align}
The identity (\ref{psi ident 1}) is trivial; its content is $\int \psi=1$.  Integrating (\ref{cRho1}) from $y$ to $C$ we get
\begin{align}
& \int_{a_-}^{a_+} \frac{\irho}{\lambda-y}\, d\lambda - \int_{-b}^b \frac{\irho}{\lambda-C}\, d\lambda \nonumber \\
& \qquad \sim  \log(y-a_+)  - \log(C-a_+)  + \int_{y}^C \int_{\mathbb{R}} \frac{\rho^{(1)}_N(\lambda)}{y'-\lambda}\, d\lambda\, dy' \nonumber \\
& \qquad =  \log\left(1-a_+/y\right)  + \log\left(\frac{y}{C}\right) + O(C^{-1}) + \int_{y}^C \int_{\mathbb{R}} \frac{\rho^{(1)}_N (\lambda)}{y'-\lambda }\, d\lambda\, dy' \nonumber \\
& \qquad  = \log\left(1-a_+/y\right)  +O(C^{-1}) \nonumber \\
& \qquad\qquad + \int_{y}^C \int_{\mathbb{R}} \left[\psi(\lambda) \left(\frac{1}{y'-\lambda}-\frac{1}{y'}\right) +N^{-2}W^{(-1)}_1(y') +\ldots\right] \, d\lambda \, dy' 
\end{align}
Since $W_1(y) - NW^{(1)}_1(y)$ is $O(y^{-2})$ as $y\rightarrow\infty$, we take the limit as $C\rightarrow\infty$.
\begin{align}
\int_{a_-}^{a_+} \frac{\irho}{\lambda-y}\, d\lambda \sim & \log\left(1-a_+/y\right)  
 + \int_{y}^\infty \left[ \int_{\mathbb{R}}  \frac{\lambda \psi(\lambda)/y'}{y'-\lambda}\, d\lambda +N^{-2}W^{(-1)}_1(y') +\ldots\right]  \, dy'  \label{W1expr1}
\end{align}
This proves formula (\ref{44eq}).

\subsection{Extraction of coefficients to prove formula (\ref{e1eq1})}

It is shown in \cite{EMcL03} that if $f(\lambda)$ is continuous and grows not worse than a polynomial at $\infty$, then the integral $\int f(\lambda) \rho^{(1)}_n(\lambda)\, d\lambda$ has an asymptotic expansion in powers on $N^{-2}$.  
Taking $f(\lambda)=V(\lambda)$ we see that the right hand side of the following equation has an $N\rightarrow\infty$ expansion.  
\begin{align}
\partial_x \log Z_N(x,t) =& \frac{N^2}{x} \int_{\mathbb{R}} V(\lambda) \rho^{(1)}_{N}(\lambda)\, d\lambda
\end{align}
Furthermore, it is shown in \cite{EMcL03} that the topological expansion can be differentiated term by term with respect to parameters; thus
\begin{align}
\partial_x \log \frac{Z_N(x,t)}{Z_N(x,0)} \sim & N^{2} \partial_x e_0 + \partial_x e_1 + N^{-2} \partial_x e_2 + \ldots \label{diff top}
\end{align}
Thus for some function $C_g(x)$ not depending on $t$ we have
\begin{align}
\partial_x e_g =&  C_g(x) + [N^{2-2g}]\partial_x \log Z_N.
\end{align}
This completes the proof of (\ref{e1eq1}).


\section{Calculation of $W_1^{(-1)}$ from loop equations}

We use the method loop equations to compute asymptotics of random matrix correlators.  This method is due to Eynard \cite{EynardLoopEqs}.  However, our calculations follow the presentation of this method in \cite{GuionnetBeta}.  The random matrix correlators are defined by
\begin{align}
W_{k,N}(y_1,\ldots,y_k) =& \partial_{\epsilon_1=0}\ldots \partial_{\epsilon_k =0} \log \int \exp\left(\sum_{i=1}^k \sum_{l=1}^N \frac{\epsilon_i}{y_i-\lambda_l}\right) e^{-N \sum_i V(\lambda_i)} \Delta^{2}(\lambda) \, d^N\lambda
\end{align}
The correlator $W_{k,N}$ is a generating function for maps on an oriented surface with $k$ punctures \cite{EynardLoopEqs}, but we will not use this fact here.  $W_{1,N}(y)$ is related to the Cauchy transform of the one point correlation function $\rho^{(1)}_N$ by
\begin{align}
W_{1,N}(y)=&
 N \int \frac{\rho^{(1)}_N(\lambda)}{y-\lambda}\, d\lambda .
\end{align}

The correlators have asymptotic expansions in powers of $N$.  We define symbols $W^{(r)}_k$ for the coefficients of these expansions.
\begin{align}
W_{k,N}(y_1,\ldots,y_k) \sim & \sum_{g\geq 0} N^{2-k-2g} W_{r}^{(2-k-2g)}(y_1,\ldots,y_k) & \text{as }N\rightarrow\infty
\end{align}
The correlators are related by a sequence of ``loop equations''.  We use the loop equations essentially as they are stated in theorems 3.2, and 3.3 of \cite{GuionnetBeta}; the rank 1 and 2 loop equations are
\begin{align}
0\sim & W_{2}(y,y) + W_{1}(y)^2 - \frac{N}{2\pi i} \oint_C \frac{V'(\xi)}{y-\xi} W_{1}(\xi)\, d\xi \label{loop1} \\
0\sim & W_{3}(y,y,z) + 2 W_{1}(y) W_{2}(y,z) -\frac{N}{2\pi i}\oint_C  \frac{V'(\xi)}{y-\xi} W_{2}(\xi,z)\, d\xi + \partial_z \left(\frac{W_{1}(y)-W_{1}(z)}{y-z}\right) \label{loop2}
\end{align}
The loop equations in \cite{GuionnetBeta} are exact equalities, but we have neglected a pair of terms from each equation involving derivatives with respect to $a_\pm$.  These terms are small beyond all asymptotic orders, thus by neglecting them our loop equations become only equalities of asymptotic expansions.

We will extract terms at orders $N^1$ and $N^0$ from equations (\ref{loop2}) and (\ref{loop1}) respectively.  This will give equations from which we will determine $W_{2}^{(0)}$ and $W_{1}^{(1)}$.  The solution for these quantities is facilitated by an integral operator $K$.  It is proven in section 4.2 of \cite{GuionnetBeta} that the operator $K$ can be inverted using the following formula.
\begin{align}
Kf(y) =& 2 W^{(1)}_1(y)f(y) - \frac{1}{2\pi i} \oint_C \frac{V'(\xi)}{y-\xi}f(\xi)\, d\xi \\
K^{-1}g(y) =& \frac{-x}{R(y)} \frac{1}{2\pi i} \oint_C \frac{1}{(y-\xi)h(\xi)}g(\xi)\, d\xi \label{K inverse}
\end{align}
These formulas assume that $f(y)=O(y^{-2})$ as $y\rightarrow\infty$.  The contour $C$ winds once around $[\alpha_-,\alpha_+]$ and the point $y$ is exterior to the contour $C$.    Actually, we have slightly modified the formula from \cite{GuionnetBeta} by including the prefactor $x$, this arises because of a difference in notation.

\subsection{Calculation of $W_{2}^{(0)}$ from rank 2 loop equation}

We define a function $R(\lambda)$ analytic expect for a branch cut on $[\alpha_-,\alpha_+]$.  With this notation, we have the following expression for $\widetilde{\psi}$.
\begin{align}
R(\lambda)=& \sqrt{\lambda-\alpha_-}\sqrt{\lambda-\alpha_+}\\
\widetilde{\psi}(\lambda) =& \frac{1}{2\pi i x} R(\lambda) h(\lambda)
\end{align}
Extracting terms at order $N$ from the rank 2 loop equation, we find that
\begin{align}
0=& K_y W_{2}^{(0)}(y,z) + \partial_z \left(\frac{W_{1}^{(1)}(y)- W_{1}^{(1)}(z)}{y-z}\right).  \label{loop 2 order 1}
\end{align}
The subscript ``$K_y$'' means that the operator $K$ acts on the first argument of $W_{2}^{(0)}$, with $z$ behaving as a parameter.  The difference quotient in (\ref{loop 2 order 1}) can be expressed in terms of the equilibrium measure by
\begin{align}
\frac{W_{1}^{(1)}(y)- W_{1}^{(1)}(z)}{y-z} =&  \int_{\alpha_-}^{\alpha_+} \left( \frac{1}{y-\zeta } - \frac{1}{ z-\zeta }\right) \frac{\psi(\zeta)}{y-z}\, d\zeta \nonumber \\
=& - \int_{\alpha_-}^{\alpha_+} \frac{\psi(\zeta)}{(\zeta-y)(\zeta-z)}\, d\zeta.
\end{align}
By inverting the operator $K$ we solve for $W_{2}^{(0)}(y,z)$.  The points $y,z$ are outside the contour $C$.
\begin{align}
W_{2}^{(0)}(y,z) =& \frac{-x}{R(y)} \frac{1}{2\pi i} \oint_C \frac{1}{(y-\xi)h(\xi)} \partial_z \left( \int_{\alpha_-}^{\alpha_+} \frac{\psi(\zeta)\, d\zeta}{(\zeta-\xi)(\zeta-z)}\right)\, d\,\xi \nonumber \\
=& \frac{-x}{R(y)} \partial_z \int_{\alpha_-}^{\alpha_+} \frac{-\psi(\zeta)}{(y-\zeta)h(\zeta)(\zeta-z)}\, d\zeta \nonumber \\
=& \frac{-x}{R(y)} \partial_z \frac{1}{2} \oint_C \frac{\widetilde{\psi}(\zeta)}{(y-\zeta)h(\zeta)(\zeta-z)}\, d\zeta \nonumber \\
=& \frac{-1}{4\pi i R(y)} \partial_z \oint_C \frac{ R(\zeta)}{(y-\zeta)(\zeta-z)}\, d\zeta
\end{align}
We now move the contour $C$ past $y,z$, picking up residues of the integrand at these points.  The contour can then be shrunk to a small loop around $\infty$.  The residue at $\infty$ is constant with respect to $z$, so is killed by $\partial_z$.
\begin{align}
\ldots =& \frac{-1}{2 R(y)} \partial_z \left( \frac{R(z)-R(y)}{y-z} + \frac{1}{2\pi i}\oint_{\infty}\frac{ R(\zeta)}{(y-\zeta)(\zeta-z)}\, d\zeta\right) \nonumber \\
=& \frac{-1}{2 R(y)} \partial_z  \left( \frac{ R(z)- R(y)}{z-y}\right)
\end{align}
For the purpose of calculating $e_1$ we only need values of $W_{2}^{(0)}$ along the diagonal $y=z$.  
\begin{align}
W_{2}^{(0)}(y,y) =& \frac{-1}{2 R(y)}  \partial_{z=y}  \frac{R(z)-R(y)}{z-y} \nonumber \\
=& \frac{-1/16}{(y-\alpha_-)^2} + \frac{1/8}{(y+\alpha_+)(y-\alpha_-)} + \frac{-1/16}{(y-\alpha_+)^2}
\end{align}

\subsection{Calculation of $W_{1}^{(-1)}$ from rank 1 loop equation}

Extracting terms at order $N^{0}$ from the rank 1 loop equation gives
\begin{align}
0=& W^{(0)}_2(y,y) +K W_{1}^{(-1)}(y) 
\end{align}
We solve for $W_{1}^{(-1)}(y)$ by inverting the operator $K$.
\begin{align}
W_{1}^{(-1)}(y) =&\frac{x}{R(y)} \frac{1}{2\pi i} \oint_C \frac{1}{(y-\xi)h(\xi)} W^{(0)}_2(\xi,\xi)\, d\xi \nonumber \\
=& \frac{x}{R(y)} \frac{1}{2\pi i} \oint_C \frac{1}{(y-\xi)h(\xi)} \left( \frac{-1/16}{(\xi-\alpha_-)^2} + \frac{1/8}{(\xi -\alpha_-)(\xi -\alpha_+)} + \frac{-1/16}{(\xi -\alpha_+)^2} \right) \, d\xi \nonumber \\
=& \frac{x}{R(y)}\bigg (\frac{h(\alpha_-)-2\sqrt{z}h'(\alpha_-)}{-32\sqrt{z} h(\alpha_-)^2(y-\alpha_-)}+\frac{1}{-16h(\alpha_-)(y-\alpha_-)^2}  \nonumber \\
& \qquad \qquad \frac{h(\alpha_+)+2\sqrt{z}h'(\alpha_+)}{32\sqrt{z} h(\alpha_+)^2(y-\alpha_+)}+\frac{1}{-16h(\alpha_+)(y-\alpha_+)^2} \bigg) \label{W 6 1}
\end{align}
To calculate $e_1$ we will need an indefinite integral of $W_{1}^{(-1)}$.  
This we compute using the following integral formulas, which can be proved by straitforward calculation.
\begin{align}
\int_{y}^\infty \frac{dy}{(y-\alpha_-)R(y)} =& \frac{1}{2\sqrt{z}}\left(1-\frac{(y-\alpha_+)}{R(y)}\right) \label{int 6 1}\\
\int_{y}^\infty \frac{dy}{(y-\alpha_-)^2 R(y)} =& \frac{1}{12z}\left(1-\frac{(y-\alpha_+)}{R(y)}-\frac{2\sqrt{z}(y-\alpha_+)}{(y-\alpha_-)R(y)}\right) \label{int 6 2}
\end{align}
The integrals $\int_{y}^\infty [(y-\alpha_+)R(y)]^{-1}\, dy$ and $\int_{y}^\infty [(y-\alpha_+)^2 R(y)]^{-1}\, dy$ are given by substituting $\alpha_{\pm}\mapsto\alpha_{\mp},\sqrt{z}\mapsto -\sqrt{z}$ in the above formulas. 
Using  (\ref{int 6 1}) and (\ref{int 6 2}) to integrate in (\ref{W 6 1}), we find that
\begin{align}
\frac{1}{x}\int_{y}^{\infty} W_{1}^{(-1)} =& \frac{2h(\alpha_-) -3\sqrt{z}h'(\alpha_-)}{-96z h(\alpha_-)^2}\left(1-\frac{y-\alpha_+}{R(y)}\right) \nonumber \\
&\qquad +\frac{1}{96\sqrt{z}h(\alpha_-)}\frac{y-\alpha_+}{(y-\alpha_-)R(y)}
+ \left(\begin{array}{>{\displaystyle}c}
\sqrt{z}\leftrightarrow -\sqrt{z} \\  
\alpha_{\pm} \leftrightarrow \alpha_{\mp}
\end{array}\right).
\end{align}
The term at the end with $\leftrightarrow$ arrows stands for all the preceeding terms again, but with the replacements $\sqrt{z}\mapsto -\sqrt{z}$ and $\alpha_\pm \mapsto \alpha_\mp$.

\section{Calculation of $e_1$ from $W_{1}^{(-1)}$}

Our calculations so far have shown that
\begin{align}
\partial_x e_1 =& C_1(x) - \frac{1}{x}\oint_C \widetilde{\psi}(y) \int_{y}^\infty W_{1}^{(-1)}(y')\, dy' \, dy \nonumber  \\
=& -\oint_{C} \frac{R(y)h(y)}{2\pi i\, x} \bigg( \frac{2h(\alpha_-) -3\sqrt{z}h'(\alpha_-)}{-96z h(\alpha_-)^2}\left(1-\frac{y-\alpha_+}{R(y)}\right)  \nonumber \\
& \qquad \qquad  +\frac{1}{96\sqrt{z}h(\alpha_-)}\frac{y-\alpha_+}{(y-\alpha_-)R(y)} +(\leftrightarrow )\bigg)\, dy
\end{align}
Our notation $X+ (\leftrightarrow)$ means the expression $X$ plus the result of replacing $\sqrt{z}\mapsto -\sqrt{z}, \alpha_{\pm}\mapsto \alpha_\mp$ in the expression $X$.  Here $X$ is understood to be all terms within in the same parenthesis as $(\leftrightarrow)$.
The function $W_{1}^{(-1)}$ can be separated into a contant part, and a $y$-dependent part.  Since the $\psi(\lambda)\, d\lambda$ is a probability measure, $\oint_C \widetilde{\psi}=-2$.  Thus the constant part of $W_{1}^{(-1)}$ contributes 
\begin{align}
&  -(-2)\left[\frac{2h(\alpha_-) -3\sqrt{z}h'(\alpha_-)}{-96z h(\alpha_-)^2} + \left( \leftrightarrow \right)\right] \nonumber \\
& \qquad = \frac{2h(\alpha_-) -3\sqrt{z}h'(\alpha_-)}{-48z h(\alpha_-)^2} + \frac{2h(\alpha_+) +3\sqrt{z}h'(\alpha_+)}{-48z h(\alpha_+)^2}
 \end{align}
 to $\partial_x e_1$.  Now we compute the contour integral of the $y$-dependent part.
 \begin{align}
& \oint_{C} \frac{R(y)h(y)}{2\pi i\, x}\bigg( \frac{2h(\alpha_-) -3\sqrt{z}h'(\alpha_-)}{96z h(\alpha_-)^2}\frac{(y-\alpha_+)}{-R(y)}-\frac{1}{96\sqrt{z}h(\alpha_-)}\frac{y-\alpha_+}{(y-\alpha_-)R(y)} +(\leftrightarrow )\bigg)\, dy \nonumber \\
& \qquad =\frac{1}{2\pi i\,x} \oint_C \text{holomorphic} + \left[\frac{-1}{96\sqrt{z}h(\alpha_-)}\frac{(y-\alpha_+)h(y)}{(y-\alpha_-)} +(\leftrightarrow )\right] \, dy \nonumber  \\
& \qquad =\frac{-1}{96x\sqrt{z}h(\alpha_-)} \frac{(\alpha_- - \alpha_+)h(\alpha_-)}{-1} + (\leftrightarrow) \nonumber \\
& \qquad = -\frac{1}{12x}
 \end{align}
Thus we obtain the following formula for $e_1$.  When $t=0$, the left hand side of (\ref{e1 1}) vanishes, and the equilibrium measure reduces to Wigner's semicircle; that is, at $t=0$ we have $z=1$ and $h(\lambda)=1$.  It follows that $C_1(x)$ and $-\frac{1}{12x}$ must cancel.
\begin{align}
\partial_x e_1 =& C_1(x) + \frac{2h(\alpha_-) -3\sqrt{z}h'(\alpha_-)}{-48z h(\alpha_-)^2} + \frac{2h(\alpha_+) +3\sqrt{z}h'(\alpha_+)}{-48z h(\alpha_+)^2} -\frac{1}{12x}  \nonumber \\
=& \frac{2h(\alpha_-) -3\sqrt{z}h'(\alpha_-)}{-48z h(\alpha_-)^2} + \frac{2h(\alpha_+) +3\sqrt{z}h'(\alpha_+)}{-48z h(\alpha_+)^2} \label{e1 1}
\end{align}
We now reformulate the above expression in terms of $u$ and $z$ using our valence independent formula for the equilibrium measure.  
\begin{align}
h(\alpha_+)=& \frac{1}{\sqrt{z}u_x +z_x} \label{h a 0} \\
h'(\alpha_+)=& \frac{\sqrt{z}u_{x}^2 + 3u_x z_x + 4z u_{xx} + 4\sqrt{z}z_{xx}}{-6(\sqrt{z}u_{x} +z_{x})^3}. \label{h a 1}
\end{align}
Note that, as observed in the introduction, formulas for $h(\alpha_-),h'(\alpha_-)$ are obtained by making substitutions $\sqrt{z}\mapsto -\sqrt{z}, \alpha_\pm \mapsto \alpha_{\mp}$ in (\ref{h a 0}),(\ref{h a 1}).
Substituting for the quantities $h(\alpha_\pm), h'(\alpha_\pm)$ in (\ref{e1 1}), we find that
\begin{align}
\partial_x e_1 =& \frac{-z_x}{12z } + \frac{z_x u_{x}^2 +2zu_x u_{xx} -2z_x z_{xx}}{24(zu_{x}^2 -z_{x}^2)}
\end{align}
Integrating both sides with respect to $x$ yields
\begin{align}
e_1 =& -\frac{1}{12}\log z + \frac{1}{24} \log\left(z u_{x}^2 -z_{x}^2\right). \label{val indep e1}
\end{align}
One might expect the right and left hand sides to differ by a constant of integration, but we now show that is not the case: since $e_1$ is a generating function for maps on the torus, and a map must have at least one vertex, it follows that $e_1(t=0)=0$.  Also, at $t=0$ our matrix ensemble reduces to the GUE (except that the potential is scaled by $1/x$), so we obtain the evaluations $z=x, z_x=1, u=0$ and $u_x=1$ from Wigner's semicircle.  One easily checks that both sides match when $t=0$ and therefore do not differ by a constant.

\section{Special case of $e_1$ formula for regular maps}
The formula obtained above for $e_1$ is valid for arbitrary an polynomial potential.  Let us consider the special case of a regular map, that is, where each vertex has the same valence $j$.  This corresponds to a potential of the form
\begin{align}
V(\lambda) =& \frac{1}{x} \left( \frac{1}{2}\lambda^2 + t \lambda^j\right). \label{special V}
\end{align}
Three different proofs of the following equations will appear in P. Waters dissertation.
\begin{align}		
u + jt u_t =& 2\partial_x(uz) 	\label{uToda a}	\\
2z +jt  z_t =& z \partial_x (u^2) +\partial_x (z^2) \label{uToda b}
\end{align}
These equations can be proved by taking a combination of string and Toda equations; or by introducing an ``edge counting variable'' in the matrix model and constructing a Toda-like equation in this variable; or by expressing a particular Virasoro constraint for the matrix model in terms of recurrence coefficients for orthogonal polynomials.  

The following scaling relations are well known \cite{EP11}.
\begin{align}		
(j-2)t  z_t + 2z	=&	2x  z_x	\label{scaling 1}\\
(j-2) t  u_t + u =& 2x u_x		\label{scaling 2}
\end{align}
Now use (\ref{scaling 1}) and (\ref{scaling 2}) to eliminate $t$ derivatives in (\ref{uToda a}) and (\ref{uToda b}).  Solving the resulting system for $u_x$ and $z_x$ gives following derivative reducing rules.
\begin{align}
z_{x}=& \frac{2z(jx-(j-2)z)+(j-2)u^2z}{(jx-(j-2)z)^2-(j-2)^2u^2z} \label{z deriv} \\
u_x=& \frac{jxu+(j-2)uz}{(jx-(j-2)z)^2-(j-2)^2u^2z} \label{u deriv}
\end{align}
Use (\ref{z deriv}) and (\ref{u deriv}) to replace the quantities $u_x$ and $z_x$ in formula (\ref{val indep e1}), and set $x=1$.  This gives the following formula for $e_1$, valid for potentials of the form $V(\lambda) = \frac{1}{2}\lambda^2 + t \lambda^j$.
\begin{align}
e_1 =& \frac{1}{24} \log \frac{4-u^2/z}{(j-(j-2)z)^2-(j-2)^2u^2z}.
\end{align}
For an even valence ($u=0$), this reduces to the formula (\ref{EMcLP e1}) of Ercolani, McLaughlin and Pierce.

\end{document}